\newcommand{\id}{\operatorname{\text{\bf I}}}
\newcommand{\calH}{\mathscr H}
\newcommand{\C}{\mathbb{C}}
\newcommand{\D}{\mathbb{D}}
\newcommand{\T}{\mathbb{T}}
\newcommand{\Te}{\mathbb{T}}
\newcommand{\Z}{\mathbb{Z}}
\newcommand{\R}{\mathbb{R}}
\newcommand{\calE}{\mathcal{E}}
\newcommand{\Fspace}{\mathscr{F}(\C)}
\newcommand{\Fshort}{\mathscr{F}}
\newcommand{\diffA}{\mathrm{dA}}
\newcommand{\diff}{\mathrm{d}}
\newcommand{\Top}{\mathbf{T}}
\newcommand{\Mop}{\mathbf{M}}
\newcommand{\Uop}{\mathbf{U}}
\newcommand{\Bop}{\mathbf{B}}
\newcommand{\Fop}{\mathbf{F}}
\newcommand{\e}{\mathrm{e}}
\newcommand{\imag}{\mathrm{i}}
\newcommand{\im}{\mathrm{Im}\,}
\renewcommand{\hat}{\widehat}
\newtheorem{thm}{Theorem}[section]
\newtheorem{lem}[thm]{Lemma}
\newtheorem{prop}[thm]{Proposition}
\theoremstyle{definition}
\newtheorem{defn}[thm]{Definition}
\newtheorem{prob}[thm]{Problem}
\theoremstyle{remark}
\newtheorem{rem}[thm]{Remark}
\numberwithin{equation}{section}
\title[Deep zero problems]
{Deep zero problems}
\let\@wraptoccontribs\wraptoccontribs
\begin{document}
\author[Hedenmalm]{Haakan Hedenmalm}

\address{Department of Mathematics
\\
The Royal Institute of Technology
\\
S -- 100 44 Stockholm
\\
SWEDEN
\\ 
$\&$ Department  of Mathematics and Computer Sciences
\\
St. Petersburg State University
\\
St. Petersburg
\\
RUSSIA
\\
$\&$ Department of Mathematics and Statistics
\\
University of Reading
\\
Reading
\\
U.K.}
\email{haakanh@kth.se}

\keywords{Bergman kernel, Siegel-Bargmann space, Bargmann-Fock space,
uniqueness set, interpolation, sampling}

\subjclass{Primary 46E20, 30E05, 46E22}

\date{\today}

\begin{abstract} 
We introduce a novel collection of uniqueness problems, with related 
interpolation and sampling issues. We call them \emph{deep zero}
problems, as they are concerned with local properties at a small number
of given points.

\end{abstract}

\maketitle


\section{Introduction}

We consider an infinite-dimensional Hilbert space $\calH$ of holomorphic 
functions on a domain $\Omega$ in the complex plane $\C$. The inner product 
on $\calH$ is such that the point evaluation $f\mapsto f(\lambda)$ is continuous
$\calH\to\C$ for each $\lambda\in\C$. We will consider a finite collection of points
$z_1,\ldots,z_N\in\Omega$, and place vanishing conditions on an infinite collection 
of the higher derivatives $f^{(j)}(z_k)$, for $k=1,\ldots,N$. We allow ourselves to replace
the vanishing condition by the vanishing of a sequence of finite linear combinations
of such expressions corresponding to each given point $z_k$. To not get bogged down by
a technical-looking definition in full generality, we consider a specific situation.
The \emph{Bargmann-Fock space} (or \emph{Siegel-Bargmann} space) $\Fspace$ is 
the Hilbert space of all entire functions
$f$ subject to the norm boundedness condition
\[
\|f\|^2_{\Fspace}:=\int_\C|f(z)|^2\diff\mu_G(z)<+\infty,
\]
where $\mu_G$ is the Gaussian probability measure
\[
\diff\mu_G(z):=\e^{-|z|^2}\diffA(z),\qquad\text{with}\quad \diffA(z):=\pi^ {-1}\diff x\diff y,
\]
under the usual convention $z=x+\imag y$. In terms of the Taylor expansion
\[
f(z)=\sum_{j=0}^{+\infty}\hat f(j)\,z^j,
\]
the norm finds a convenient expression in terms of the Taylor coefficients:
\begin{equation}
\|f\|_{\Fspace}^2=\sum_{j=0}^{+\infty}j!{|\hat f(j)|^2}.
\label{eq:normFspace}
\end{equation}
Correspondingly, the inner product on $\Fspace$ may be expressed both in terms of an
integral and as a sum involving Taylor coefficients:
\[
\langle f,g\rangle_{\Fspace}:=\int_\C f\bar g\,\diff\mu_G=
\sum_{j=0}^{+\infty}j!\hat f(j)\overline{\hat g(j)}.
\]
For background material on the Fock-Bargmann space, we refer to \cite{Zhu}, \cite{Folland}.

The point evaluations $f\mapsto f(z)$ are bounded linear functionals $\calH\to\C$, and
consequently, by the Riesz representation theorem, given by the inner product with an
element of $\calH$. In terms of the \emph{reproducing kernel } $K_\calH$ of $\Fspace$,
given by the formula
\[
K_\Fshort(z,w):=\e^{z\bar w},
\]
we have 
\begin{equation}
f(z)=\langle f,K_\Fshort(\cdot,z)\rangle_{\Fspace}=\int_\C f(w)K_\calH(z,w)\diff\mu_G(w)
=\int_\C \e^{z\bar w}\,f(w)\diff\mu_G(w).
\label{eq:reproprop1}
\end{equation}
Here, it is immediate that $K_\Fshort(\cdot,z)\in\Fspace$ for each $z\in\C$, so that we have found 
the element guaranteed by the Riesz representation theorem.
For a given $f\in\Fspace$, its $\alpha$-translate
\[
\Top_\alpha f(z):=f(z-\alpha),\qquad a,z\in\C,
\]
need not be in $\Fspace$ unless $\alpha=0$. There is a way to rectify this, by introducing the
\emph{Fock translates} 
\[
\Uop_\alpha f(z):=\e^{-\frac12|\alpha|^2+\bar \alpha z}f(z-\alpha), \qquad \alpha,z\in\C.
\]
Given that we may associate with an element of the Fock space the corresponding density
\[
|f(z)|^2\diff\mu_G(z)=|f(z)|^2\e^{-|z|^2}\diffA(z),
\]
whose integral equals the norm of $f\in\Fspace$, the interpretation of the Fock translate becomes
clearer as
\begin{multline}
|\Uop_\alpha f(z)|^2\diff\mu_G(z)=|\Uop_\alpha f(z)|^2\e^{-|z|^2}\diffA(z)
\\
=|f(z-\alpha)|^2\e^{-|z-\alpha|^2}
\diffA(z)=|f(z-\alpha)|^2\diff\mu_G(z-\alpha)
\label{eq:Focktransmod}
\end{multline}
means that we just perform an translation by $\alpha$ to the density. 
Consequently, the Fock translation acts isometrically on $\Fspace$,
\[
\|\Uop_\alpha f\|_{\Fspace}=\|f\|_{\Fspace},\qquad f\in\Fspace,
\]
and, moreover, if $g=\Uop_\alpha f$, we may solve for $f$: $f=\Uop_\alpha^{-1}g=\Uop_{-\alpha}g$. 
This means that the isometry $\Uop_a:\Fspace\to\Fspace$ is surjective, and hence unitary, meaning 
that its adjoint $\Uop_\alpha^\star$ meets $\Uop_\alpha^\star=\Uop_\alpha^{-1}$. 
More generally, the family of unitaries $\Uop_\alpha$ have the commutation property
\begin{equation}
\Uop_\alpha\Uop_\beta f=\e^{-\imag\im \alpha\bar\beta}\Uop_{\alpha+\beta}f=
\e^{-2\imag\im \alpha\bar\beta}\Uop_{\beta}\Uop_{\alpha}f,\qquad f\in\Fspace,\,\,\,\alpha,\beta\in\C,
\label{eq:commute1}
\end{equation}
which we may interpret to say that we have a projective unitary representation of the additive group
$\C$.
Let us agree to write $\mathbb{N}_0:=\{0,1,2,\ldots\}$. We consider the following toy problem.

\begin{prob}
("toy problem") Let $\calE$ be a collection of nonnegative integers, and 
$\calE^c:=\mathbb{N}_0\setminus\calE$ its complement. Suppose that $f\in\Fspace$ has the
property that
\[
f^{(j)}(0)=0,\qquad j\in\calE,
\]
while for some other point $\beta\in\C$ it holds that
\[
(\Uop_\beta f)^{(j)}(0)=0, \qquad j\in\calE^c.
\]
Does it then follow that $f=0$?
\end{prob}

Naturally, if $\beta=0$, then the two conditions assert that all the Taylor coefficients of $f$ vanish at
the origin, and then $f=0$ holds automatically. But what if $\beta\ne0$? An immediate observation is that
the condition of asking some derivatives to vanish at a given point is rather fragile in the sense that if we
multiply the function by some polynomial $p$, the condition for $f$ will generally not survive for the product $pf$. 
Another immediate observation is that we may consider, more generally, two subsets $\calE$ and $\calE'$ 
instead of just $\calE$ and its complement $\calE^c$.

We may also formulate natural interpolation and sampling problems. Note that in view of 
\eqref{eq:normFspace} and the equality $f^{(j)}(0)=j!\hat f(j)$, the norm on $\Fspace$ may be written in the form
\[
\|f\|_{\Fspace}^2=\sum_{j=0}^{+\infty}\frac{|f^{(j)}(0)|^2}{j!}.
\]

\begin{defn}
("toy deep interpolation") The \emph{interpolation question} asks whether
we can solve, with $f\in\Fspace$,  
\[
f^{(j)}(0)=a_j,\qquad j\in\calE,
\]
and 
\[
(\Uop_\beta f)^{(j)}(0)=b_j,\qquad j\in\calE^c,
\]
whenever 
\[
\sum_{j\in\calE}\frac{|a_j|^2}{j!}+\sum_{j\in\calE^c}\frac{|b_j|^2}{j!}<+\infty.
\]
\label{def:1}
\end{defn}

\begin{defn}
("toy deep sampling") The \emph{sampling question} asks whether
\[
\sum_{j\in\calE}\frac{|f^{(j)}(0)|^2}{j!}+\sum_{j\in\calE^c}
\frac{|(\Uop_\beta f)^{(j)}(0)|^2}{j!}\ge\epsilon
\|f\|^2_{\Fspace}
\]
holds for some positive constant $\epsilon$ and all $f\in\Fspace$.
\label{def:2}
\end{defn}

A weaker requirement than the deep sampling problem runs as follows. 
Is it true, at least, that  
there exists a positive constant $C(w)$ such that for each $f\in\Fspace$,
\begin{equation}
\label{eq:pteval}
|f(w)|\le C(w)\bigg\{\sum_{j\in\calE}\frac{|f^{(j)}(0)|^2}{j!}+
\sum_{j\in\calE^c}\frac{|(\Uop_\beta f)^{(j)}(0)|^2}{j!}\bigg\},\qquad w\in\C?
\end{equation}
A property analogous to \eqref{eq:pteval} was important in \cite{HRS} for a certain
constructive approach to the invariant suspaces of index $2$ in Bergman spaces.
We have the following theorem.

\begin{thm}
If $\calE=\{0,2,4,\ldots\}$ are the nonnegative even integers, and if $f\in\Fspace$ meets
\[
f^{(j)}(0)=0,\qquad j\in\calE,
\]
while for some point $\beta\in\C$ it holds that
\[
(\Uop_\beta f)^{(j)}(0)=0, \qquad j\in\calE^c,
\]
then $f=0$ identically. The same conclusion holds if $\calE$ consists of the nonnegative odd
integers as well. 
\label{thm:evenodd}
\end{thm}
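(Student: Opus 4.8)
The plan is to convert the two vanishing hypotheses into parity statements and then to recognize $f$ as an eigenvector of a nonzero Fock translation, which an integrability argument will annihilate. First I would note that, by $\hat f(j)=f^{(j)}(0)/j!$, the condition $f^{(j)}(0)=0$ for all even $j$ is exactly the assertion that $f$ is odd, while $(\Uop_\beta f)^{(j)}(0)=0$ for all odd $j$ says that $\Uop_\beta f$ is even. Introducing the parity operator $Jf(z):=f(-z)$, which is a self-adjoint unitary involution on $\Fspace$, the two hypotheses read $Jf=-f$ and $J\Uop_\beta f=\Uop_\beta f$. (In the odd-integer version one instead has $Jf=f$ and $J\Uop_\beta f=-\Uop_\beta f$.)

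Next I would compute how $J$ interacts with the Fock translates. A direct substitution in the definition of $\Uop_\alpha$ gives the intertwining relation $J\Uop_\alpha=\Uop_{-\alpha}J$ for every $\alpha\in\C$, reflecting that conjugating a translation by the parity reversal flips the translation vector. Feeding $Jf=-f$ into the evenness of $\Uop_\beta f$ then yields $\Uop_\beta f=J\Uop_\beta f=\Uop_{-\beta}Jf=-\Uop_{-\beta}f$. Applying $\Uop_\beta$ to both sides and invoking the composition rule \eqref{eq:commute1}, in which no phase survives because $\im(\beta\bar\beta)=\im(\beta\,\overline{(-\beta)})=0$, I obtain $\Uop_{2\beta}f=-f$. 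Thus $f$ is an eigenvector of the unitary $\Uop_{2\beta}$ with eigenvalue $-1$; when $\beta=0$ this forces $f=0$ at once, so from here on assume $\beta\neq0$.

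The crux is then to show that no nonzero $f\in\Fspace$ can satisfy $\Uop_{2\beta}f=-f$. Since this gives $|\Uop_{2\beta}f(z)|=|f(z)|$ pointwise, the modulus identity \eqref{eq:Focktransmod} applied with $\alpha=2\beta$ shows that the density $\rho(z):=|f(z)|^2\e^{-|z|^2}$ satisfies $\rho(z-2\beta)=\rho(z)$, i.e. $\rho$ is invariant under translation by the nonzero vector $2\beta$. But $\rho\ge0$ and $\int_\C\rho\,\diffA=\|f\|^2_{\Fspace}<+\infty$, whereas a nonnegative function that is periodic along $2\beta$ and integrable over $\C$ must vanish almost everywhere: splitting $\C$ into the infinitely many translates of a fundamental strip, each contributes the same amount, so that amount is $0$. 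Hence $\rho\equiv0$ and $f\equiv0$. The odd-integer case runs verbatim, since there too one arrives at $\Uop_{2\beta}f=-f$.

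The step I expect to be genuinely delicate is this last one. The tempting route is to compare the exponential growth forced by $\Uop_{2\beta}f=-f$ with the reproducing-kernel bound $|f(z)|\le\|f\|_{\Fspace}\,\e^{|z|^2/2}$ coming from \eqref{eq:reproprop1} and Cauchy--Schwarz; but this comparison is inconclusive, because the eigenvector relation saturates that bound along the lattice direction $2\beta\Z$ to both leading and subleading order, leaving only a bounded slack. The resolution is to abandon growth estimates and instead exploit the exact isometric identity \eqref{eq:Focktransmod}, which upgrades the unit-modulus eigenvalue into honest periodicity of an integrable density; in spectral language this is the statement that the nontrivial Fock translations carry no point spectrum, and it is what ultimately drives $f$ to zero.
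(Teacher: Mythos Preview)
Your argument is correct and follows the same overall architecture as the paper: translate the hypotheses into parity statements, combine them to exhibit $f$ as an eigenvector of a nonzero Fock translate (the paper lands on $\Uop_{-2\beta}f=-f$, you on $\Uop_{2\beta}f=-f$, which are equivalent), and then rule out point spectrum. The only substantive difference is in that last step. The paper (Proposition~\ref{prop:emptyspectrum}) iterates the eigenvalue relation to $\Uop_{n\alpha}f=\lambda^n f$ and invokes Lemma~\ref{lem:basic}, namely the $c_0$-type decay $\e^{-|z|^2/2}|f(z)|\to 0$ as $|z|\to\infty$, obtained from density of polynomials. You instead read off from \eqref{eq:Focktransmod} that the density $|f|^2\e^{-|\cdot|^2}$ is $2\beta$-periodic and then kill it by integrability via a fundamental-domain (Fubini) argument. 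Both are clean; yours avoids the polynomial-approximation step, while the paper's isolates a reusable spectral lemma.

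One small corrective: your closing remark that the growth-comparison route is ``inconclusive'' undersells it. You are right that the bare Cauchy--Schwarz bound $|f(z)|\le\|f\|_{\Fspace}\e^{|z|^2/2}$ alone does not close the argument, but the slightly sharper statement that $\e^{-|z|^2/2}|f(z)|\to 0$ (Lemma~\ref{lem:basic}) does, and that is precisely the paper's route. So the growth approach is not a dead end---it just needs the $c_0$ upgrade rather than the $\ell^\infty$ bound.
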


After establishing this uniqueness property, if is reasonable to pass to the interpolation and
sampling problems. General reasoning would suggest that interpolation is impossible, since
the interpolation property often is the same as being a subset of a zero set in some uniform way.
In fact, we have the following.

\begin{thm}
Let $\calE$ denote either the nonnegative even integers, or the positive odd integers.
If $\beta\in\C$ with $\beta\ne0$, then we do not have a deep interpolation set in the sense
of Definition \ref{def:1}, nor do we have a deep sampling set in the sense of Definition \ref{def:2}.
\label{thm:samplint}
\end{thm}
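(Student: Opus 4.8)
The plan is to recast both questions as quantitative statements about a single bounded \emph{analysis operator} and then to reduce everything to a spectral property of the Fock translate $\Uop_{2\beta}$. Write $\ell^2_w$ for the weighted sequence space with squared norm $\sum_{j\in\calE}|a_j|^2/j!+\sum_{j\in\calE^c}|b_j|^2/j!$, and define $\Lambda\colon\Fspace\to\ell^2_w$ by $\Lambda f=\big((f^{(j)}(0))_{j\in\calE},\,((\Uop_\beta f)^{(j)}(0))_{j\in\calE^c}\big)$. Since $f\mapsto(f^{(j)}(0))_j$ is unitary onto the fully weighted $\ell^2$ and $\Uop_\beta$ is unitary on $\Fspace$, the map $\Lambda$ is bounded; Definition \ref{def:1} asks exactly that $\Lambda$ be surjective, Definition \ref{def:2} asks exactly that $\Lambda$ be bounded below, and Theorem \ref{thm:evenodd} says precisely that $\ker\Lambda=\{0\}$. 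I would first dispose of sampling and then deduce the failure of interpolation from it by a spectral-duality argument that feeds on the uniqueness theorem.

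For the sampling claim, take $\calE$ to be the even integers (the odd case is identical, with the roles of the even and odd subspaces interchanged), and let $\Pi f(z)=f(-z)$ be the parity involution, so that $\Fspace$ splits as $\Fshort_{\mathrm{even}}\oplus\Fshort_{\mathrm{odd}}$. It suffices to make the sampling inequality fail on $\Fshort_{\mathrm{odd}}$. For odd $f$ the first block of $\Lambda f$ vanishes, and using the relations $\Pi\Uop_\beta=\Uop_{-\beta}\Pi$, $\Uop_\beta\Uop_{-\beta}=\id$ and $\Uop_\beta^2=\Uop_{2\beta}$ from \eqref{eq:commute1} together with $\Pi f=-f$, a short computation gives
\[
\|\Lambda f\|_{\ell^2_w}^2=\tfrac14\big\|(\Uop_{2\beta}+\id)f\big\|_{\Fspace}^2,\qquad f\in\Fshort_{\mathrm{odd}}.
\]
Thus sampling fails as soon as $\inf\{\|(\Uop_{2\beta}+\id)f\|:f\in\Fshort_{\mathrm{odd}},\ \|f\|=1\}=0$, that is, as soon as $-1$ lies in the spectrum of the restriction of $\Uop_{2\beta}$ to $\Fshort_{\mathrm{odd}}$.

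I would establish this by producing normalized quasimodes. Passing to the Schrödinger side through the Bargmann transform (see \cite{Folland}), $\Uop_{2\beta}$ becomes a phase-space translation (Weyl) operator $W$ along a nonzero vector, since $\beta\neq0$, while $\Pi$ becomes the parity on $L^2(\R)$. There is an explicit unimodular $\chi$ (a plane wave, or a quadratic chirp when the translation has a nonzero position component) solving $W\chi=-\chi$ pointwise; multiplying $\chi$ by a smooth window $w_N$ whose width tends to infinity turns $(W+\id)(w_N\chi)$ into the product of $\chi$ with a difference of two translates of $w_N$, whose norm is $\ordo(\|w_N\chi\|)$, so $w_N\chi$ is a quasimode for the eigenvalue $-1$. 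To land in the odd subspace I would symmetrize, replacing $w_N\chi$ by $\tfrac12\big(w_N\chi-\Pi(w_N\chi)\big)$; since $\Pi W\Pi=W^{-1}$, the reflected function is again a $(-1)$-quasimode, and by centering the window far from the origin the function and its reflection become asymptotically orthogonal, so the odd part retains a definite fraction of the norm. This yields odd (and, with the symmetric combination, even) unit quasimodes, proving $-1\in\sigma(\Uop_{2\beta}|_{\Fshort_{\mathrm{odd}}})$ and hence the failure of sampling. Alternatively one can stay in $\Fspace$ and use the coherent-state chain $\sum_{|k|\le N}(-1)^k\Uop_{2k\beta}g$ with an even, respectively odd, seed $g$; for $g=1$ its Gram matrix is Toeplitz with a strictly positive theta-function symbol, giving the needed lower norm bound.

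Finally, interpolation. The operators $\Lambda^\star\Lambda$ and $\Lambda\Lambda^\star$ are bounded, self-adjoint, and have the same spectrum away from $0$. The sampling analysis shows $\inf\sigma(\Lambda^\star\Lambda)=0$, so $0\in\sigma(\Lambda^\star\Lambda)$; by Theorem \ref{thm:evenodd}, $\ker(\Lambda^\star\Lambda)=\ker\Lambda=\{0\}$, so $0$ is not an eigenvalue, hence not an isolated point of the spectrum. Therefore $0$ is an accumulation point of $\sigma(\Lambda^\star\Lambda)\setminus\{0\}=\sigma(\Lambda\Lambda^\star)\setminus\{0\}$, whence $\inf\sigma(\Lambda\Lambda^\star)=0$. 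Thus $\Lambda\Lambda^\star$ is not bounded below, $\Lambda$ is not surjective, and interpolation fails as well. The main obstacle is the quasimode step: the eigenvalue $-1$ certainly lies in the spectrum of the full operator $\Uop_{2\beta}$, but one must produce approximate eigenvectors of the prescribed parity, and keeping the symmetrized quasimode from degenerating in norm is exactly what the push-to-infinity (or the theta-function lower bound) is there to guarantee.
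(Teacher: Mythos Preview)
Your argument is correct, and the sampling half lands in essentially the same place as the paper: after the Bargmann transform (and, in the paper, a preliminary rotation making $\beta>0$), the operator $\Uop_{2\beta}+\id$ becomes multiplication by $2\e^{\imag\beta t}\cos\beta t$ on $L^2(\R)$, and both your windowed quasimodes and the paper's explicit family $\eta(t)=(1+t^2)^{-1}|\cos\beta t|^{\theta}\,\mathrm{sgn}(t)$ with $\theta\to0^+$ are simply devices for concentrating an odd $\varphi$ near a zero of $\cos\beta t$. One small wording issue: $\Uop_{2\beta}$ does not preserve $\Fshort_{\mathrm{odd}}$ (since $\Pi\Uop_{2\beta}=\Uop_{-2\beta}\Pi\ne\Uop_{2\beta}\Pi$), so ``spectrum of the restriction'' is not quite the right phrase; what you actually need, and do establish, is $\inf\{\|(\Uop_{2\beta}+\id)f\|:f\in\Fshort_{\mathrm{odd}},\ \|f\|=1\}=0$.

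Your interpolation argument, on the other hand, is genuinely different from the paper's. The paper is constructive: via the recovery formula $\varphi(t)=\big(\eta(t)+\e^{-\imag\beta t}\xi(t)\big)/(2\cos\beta t)$ it exhibits continuous data $(\xi,\eta)$ for which the would-be preimage $\varphi$ is forced to have a non-square-integrable singularity at a zero of $\cos\beta t$, so no $f\in\Fspace$ can interpolate. You instead deduce nonsurjectivity of $\Lambda$ softly, combining the failure of sampling with Theorem~\ref{thm:evenodd} through the identity $\sigma(\Lambda^\star\Lambda)\setminus\{0\}=\sigma(\Lambda\Lambda^\star)\setminus\{0\}$ and the fact that an isolated point in the spectrum of a bounded self-adjoint operator is necessarily an eigenvalue. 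This is shorter and conceptually clean---it shows that, here, uniqueness together with nonsampling automatically forces noninterpolation---but it gives no information about \emph{which} data fail to be interpolated, whereas the paper's explicit obstruction does.
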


Less precisely, the given deep uniqueness set is borderline, as it is neither sampling nor
interpolating. 

\begin{rem}
The unitary transformations $\Uop_\alpha$ are modelled on the translations $z\mapsto z-\alpha$.
Rotations $z\mapsto\rho z$, with $|\rho|=1$, also give rise to natural unitary transformations on
$\Fspace$. We combine them to the rigid motions group of elements $(\rho,\alpha)$ with associated 
map $z\mapsto \rho z-\alpha$, where $|\rho|=1$ and $\alpha\in\C$ is free. 
The group operation is $(\rho',\alpha')(\rho,\alpha)=(\rho'\rho,\rho'\alpha+\alpha')$. We put
\begin{equation}
\Uop_{(\rho,\alpha)}f(z):=\e^{-\frac12|\alpha|^2+\bar\alpha\rho z}f(\rho z-\alpha),
\label{eq:rigidmotions}
\end{equation}
which acts unitarily on $\Fspace$ and coincides with $\Uop_\alpha$ when $\rho=1$. The
commutation rule reads
\begin{equation}
\Uop_{(\rho,\alpha)}\Uop_{(\rho',\alpha')}=\e^{-\imag \im(\alpha\rho'\bar\alpha')}
\Uop_{(\rho'\rho,\rho'\alpha+\alpha')}.
\label{eq:commrule}
\end{equation}
\end{rem}

\section{The uniqueness for the even and odd integers}

We begin with a basic lemma, which is known. The proof is short and presented for completeness.

\begin{lem}
\label{lem:basic}
Suppose $f\in\Fspace$. Then
\[
|f(z)|\le \|f\|_{\Fspace}\sqrt{K_\Fshort(z,z)}=\|f\|_{\Fspace}\e^{\frac12|z|^2},\qquad z\in\C.
\]
This estimate is sharp, the extremals being the constant multiples of $K_\Fshort(\cdot,z)$.
On the other hand, it cannot be sharp at infinity, since
\[
\lim_{|z|\to+\infty}\e^{-\frac12|z|^2}|f(z)|\to0.
\]
\end{lem}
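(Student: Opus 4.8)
The plan is to read off the pointwise bound from the reproducing property together with the Cauchy--Schwarz inequality, and then to extract the sharpness and the decay claims as refinements of that single estimate.

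First I would invoke \eqref{eq:reproprop1} to write $f(z)=\langle f,K_\Fshort(\cdot,z)\rangle_{\Fspace}$ and apply the Cauchy--Schwarz inequality, which gives $|f(z)|\le\|f\|_{\Fspace}\,\|K_\Fshort(\cdot,z)\|_{\Fspace}$. The norm of the kernel is then computed by applying the reproducing property to the kernel itself: $\|K_\Fshort(\cdot,z)\|_{\Fspace}^2=\langle K_\Fshort(\cdot,z),K_\Fshort(\cdot,z)\rangle_{\Fspace}=K_\Fshort(z,z)=\e^{|z|^2}$, so that $\|K_\Fshort(\cdot,z)\|_{\Fspace}=\e^{\frac12|z|^2}$, which is exactly the asserted inequality.

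For sharpness I would recall that equality in Cauchy--Schwarz forces $f$ to be a scalar multiple of $K_\Fshort(\cdot,z)$; conversely, substituting $f=cK_\Fshort(\cdot,z)$ and using $\|K_\Fshort(\cdot,z)\|_{\Fspace}=\e^{\frac12|z|^2}$ verifies that equality is attained, so the extremals are precisely these constant multiples.

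For the decay at infinity the plan is a density argument resting on the coefficient form \eqref{eq:normFspace} of the norm. Since $\|f\|_{\Fspace}^2=\sum_{j}j!|\hat f(j)|^2$, the Taylor partial sums converge to $f$ in $\Fspace$, so the polynomials are dense. Given $\epsilon>0$, I would split $f=p+g$ with $p$ a polynomial and $\|g\|_{\Fspace}<\epsilon$. For the polynomial part $\e^{-\frac12|z|^2}|p(z)|\to0$ as $|z|\to+\infty$, because $p$ grows only polynomially while the Gaussian factor $\e^{-\frac12|z|^2}$ decays superexponentially; for the remainder the inequality already proved gives $\e^{-\frac12|z|^2}|g(z)|\le\|g\|_{\Fspace}<\epsilon$. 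Hence $\limsup_{|z|\to+\infty}\e^{-\frac12|z|^2}|f(z)|\le\epsilon$, and letting $\epsilon\to0$ yields the stated limit. The only point requiring any care is the uniform control of the remainder term, but this is immediate from the pointwise bound, so I do not anticipate a genuine obstacle in this lemma.
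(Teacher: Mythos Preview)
Your proof is correct and follows essentially the same route as the paper: the reproducing property plus Cauchy--Schwarz for the bound and sharpness, then density of polynomials together with the already-established pointwise estimate for the decay at infinity. Your $\epsilon$-splitting argument is precisely the ``$c_0$ is closed in $\ell^\infty$'' step the paper alludes to, just written out explicitly.
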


\begin{proof}
In view of the reproducing property \eqref{eq:reproprop1} and the Cauchy-Schwarz inequality, 
\[
|f(z)|\le\|f\|_{\Fspace}\|K_\Fshort(\cdot,z)\|_{\Fspace}
\]
with equality precisely when $f$ equals a constant multiple of $K_\Fshort(\cdot,z)$.
Moreover, since
\[
\|K_\Fshort(\cdot,z)\|_{\Fspace}^2=\langle K_\Fshort(\cdot,z),K_\Fshort(\cdot,z)\rangle_{\Fspace}
=K_\Fshort(z,z)=\e^{|z|^ 2},
\]
where in the last step we invoke the reproducing property \eqref{eq:reproprop1}, the asserted 
pointwise estimate is immediate. As for the last property, we observe that for polynomials $f$,
\[
\lim_{|z|\to+\infty}\e^{-\frac12|z|^2}|f(z)|\to0
\]
holds. Moreover, since the polynomials are dense in $\Fspace$, this property carries over to
all $f\in\Fspace$, by the same argument which gives that $c_0$ is norm closed in $\ell^\infty$. 
\end{proof}

The next result we need concerns the point spectrum of the unitaries $\Uop_\alpha$.

\begin{prop}
For $\alpha\in\C$, with $\alpha\ne0$, the point spectrum of $\Uop_\alpha:\Fspace\to\Fspace$ 
is empty. Expressed differently, if $\Uop_\alpha f=\lambda f$ holds for some $f\in\Fspace$ and
some constant $\lambda\in\C$, then $f=0$.
\label{prop:emptyspectrum}
\end{prop}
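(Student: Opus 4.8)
The plan is to track the \emph{normalized modulus} $g(z):=\e^{-\frac12|z|^2}|f(z)|$ and to exploit the fact, recorded in \eqref{eq:Focktransmod}, that the Fock translate acts on the associated density by a genuine translation. From Lemma \ref{lem:basic} I would first extract the two properties of $g$ that drive the argument: it is bounded, with $g(z)\le\|f\|_{\Fspace}$, and it vanishes at infinity, $g(z)\to0$ as $|z|\to+\infty$.

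Next I would turn \eqref{eq:Focktransmod} into a pointwise identity for $g$. Dividing the relation $|\Uop_\alpha f(z)|^2\e^{-|z|^2}\diffA(z)=|f(z-\alpha)|^2\e^{-|z-\alpha|^2}\diffA(z)$ by $\diffA(z)$ and taking (nonnegative) square roots gives
\[
\e^{-\frac12|z|^2}|\Uop_\alpha f(z)|=\e^{-\frac12|z-\alpha|^2}|f(z-\alpha)|=g(z-\alpha),\qquad z\in\C.
\]
Now suppose $\Uop_\alpha f=\lambda f$ for some $\lambda\in\C$. If $f=0$ there is nothing to prove, so assume $f\ne0$; since $\Uop_\alpha$ is unitary, an eigenvector forces $|\lambda|=1$, whence $|\Uop_\alpha f(z)|=|\lambda|\,|f(z)|=|f(z)|$. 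Substituting this into the displayed identity collapses it to
\[
g(z)=g(z-\alpha),\qquad z\in\C,
\]
so that $g$ is invariant under translation by $\alpha$, and by iteration $g(z)=g(z-n\alpha)$ for every $n\in\N$ and every $z\in\C$.

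The conclusion then follows by letting $n\to+\infty$: because $\alpha\ne0$ we have $|z-n\alpha|\to+\infty$, so the decay of $g$ yields $g(z-n\alpha)\to0$, and therefore $g(z)=0$ for each fixed $z$. Hence $|f|\equiv0$ and $f=0$ identically, as claimed. The genuinely substantive step is the passage from the eigenvalue equation to the invariance $g(z)=g(z-\alpha)$; once one recognizes that the correct invariant to study is the normalized modulus, the translation structure of $\Uop_\alpha$ and the decay from Lemma \ref{lem:basic} do the rest. I would expect the only point needing care to be the justification that $|\lambda|=1$ (which unitarity supplies cleanly); alternatively one can avoid invoking unitarity by keeping $|\lambda|$ general, deducing $g(z\pm n\alpha)=|\lambda|^{\mp n}g(z)$, and combining boundedness with the decay of $g$ to force $g\equiv0$ in each of the cases $|\lambda|<1$, $|\lambda|=1$, $|\lambda|>1$.
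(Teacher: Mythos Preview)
Your argument is correct and follows essentially the same route as the paper: reduce to $|\lambda|=1$ via the isometry of $\Uop_\alpha$, convert the eigenvalue equation into translation invariance of the normalized modulus $\e^{-\frac12|z|^2}|f(z)|$ using \eqref{eq:Focktransmod}, iterate, and invoke the decay at infinity from Lemma~\ref{lem:basic}. The only cosmetic difference is that you name $g$ and iterate the functional equation $g(z)=g(z-\alpha)$ directly, whereas the paper iterates $\Uop_\alpha$ to $\Uop_{n\alpha}$ first; the content is identical.
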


\begin{proof}
If $\Uop_\alpha f=\lambda f$ holds, the isometric property of $\Uop_\alpha$ entails that 
\[
\|f\|_{\Fspace}=\|\Uop_\alpha f\|_{\Fspace}=\|\lambda f\|_{\Fspace}=|\lambda|\,\|f\|_{\Fspace},
\]
and hence either $|\lambda|=1$ or $f=0$, or possibly both. So, unless $|\lambda|=1$, the assertion
of the proposition follows. In the instance that $|\lambda|=1$, we iterate the relation 
$\Uop_\alpha=\lambda f$, and obtain that
\[
\Uop_{n\alpha}f=\Uop_\alpha^ n f=\lambda^ n f,\qquad n=0,1,2,\ldots.
\]
However, in view of \eqref{eq:Focktransmod},
\begin{equation}
\e^{-\frac12|z|^2}|\Uop_{n\alpha}f(z)|=\e^{-\frac12|z-n\alpha|^2}|f(z-n\alpha)|
\label{eq:Focktransmod2}
\end{equation}
while
\begin{equation}
\e^{-\frac12|z|^2}|\lambda^n f(z)|=\e^{-\frac12|z|^2}|f(z)|.
\label{eq:Focktransmod3}
\end{equation}
For fixed $z\in\C$, $|z-n\alpha|\to+\infty$ as $n\to+\infty$, so that in view of Lemma \ref{lem:basic},
the right-hand side of \eqref{eq:Focktransmod2} converges to $0$. Taking \eqref{eq:Focktransmod3}
into consideration we find that $f(z)=0$ is the only possibility.
\end{proof}

We turn to the proof of Theorem \ref{thm:evenodd}. 

\begin{proof}[Proof of Theorem \ref{thm:evenodd}]
Since the assertion is trivial in case $\beta=0$, we assume that $\beta\ne0$ holds instead.
We consider first the case when $\calE=\{0,2,4,\ldots\}$. Then the condition that
\[
f^{(j)}(0)=0,\qquad j\in\calE,
\]
means that $f(-z)=-f(z)$, that is, $f$ is an even function. Likewise, the condition that
\[
(\Uop_\beta f)^{(j)}(0)=0, \qquad j\in\calE^c,
\]
means that $\Uop_\beta f(-z)=\Uop_\lambda f(z)$, so that $\Uop_\beta f$ is an even function.
Written out, the latter condition means that
\[
\Uop_\beta f(z)=\e^{-\frac12|\beta|^2+\bar\beta z}f(z-\beta)=
\e^{-\frac12|\beta|^2-\bar\beta z}f(-z-\beta)=\Uop_\beta f(-z),
\]
which amounts to having
\begin{equation}
f(z-\beta)=\e^{-2\bar\beta z}f(-z-\beta).
\label{eq:symm1}
\end{equation}
We now use the assumption that $f$ is an odd function. Then $f(-z-\beta)=-f(z+\beta)$,
and we obtain from \eqref{eq:symm1} that
\begin{equation}
f(z-\beta)=\e^{-2\bar\beta z}f(-z-\beta)=-\e^{-2\bar\beta z}f(z+\beta).
\label{eq:symm2}
\end{equation}

After a change-of-variables, this is the same as having
\[
f(z)=-\e^{-2|\beta|^2-2\bar\beta z}f(z+2\beta)=-\Uop_{-2\beta}f(z).
\]
But then $f$ is an eigenfunction of $\Uop_{-2\beta}$ with eigenvalue $-1$, which is impossible
unless $f=0$, by Proposition \ref{prop:emptyspectrum}.

The remaining case when $\calE=\{1,2,3,\ldots\}$ is handled in an analogous fashion.
\end{proof}

\section{The induced norm for the even and odd integers}

We shall study the seminorm on $\Fspace$ given by
\begin{equation}
\label{eq:starnorm}
\|f\|^2_{\calE,\beta}:=\sum_{j\in\calE}\frac{|f^{(j)}(0)|^2}{j!}+
\sum_{j\in\calE^c}\frac{|(\Uop_\beta f)^{(j)}(0)|^2}{j!}
\end{equation}
for the cases when $\calE$ consists of the nonnegative even or odd integers and $\beta\in\C$ with
$\beta\ne0$.

\noindent{\sc Case I: $\calE=\{0,2,4,\ldots\}$.}
We readily find that in the notation \eqref{eq:rigidmotions} associated with the
rigid motions group,
\begin{equation}
\label{eq:starnorm}
\|f\|^2_{\calE,\beta}
=\frac14\|f+\Uop_{(-1,0)}f\|_{\Fspace}^2
+\frac14\|\Uop_{\beta}f-\Uop_{(-1,0)}\Uop_{\beta}f\|_{\Fspace}^2
\end{equation}
with \eqref{eq:rigidmotions}, where we observe that
$\Uop_{(-1,0)}f(z)=f(-z)$ for reflection in the origin, while more generally, 
$\Uop_{(\rho,0)}f(z)=f(\rho z)$, for any complex constant $\rho$ with $|\rho|=1$ (rotations).
We put $g:=\Uop_{(\rho,0)}f$, so that $f=\Uop_{(\bar\rho,0)}g$, and find that
\begin{multline}
\|f\|^2_{\calE,\beta}
=\frac14\|\Uop_{(\bar\rho,0)}g+\Uop_{(-1,0)}\Uop_{(\bar\rho,0)}g\|_{\Fspace}^2
+\frac14\|\Uop_{\beta}\Uop_{(\bar\rho,0)}f-\Uop_{(-1,0)}\Uop_{\beta}\Uop_{(\bar\rho,0)}
f\|_{\Fspace}^2
\\
=\frac14\|\Uop_{(\rho,0)}\Uop_{(\bar\rho,0)}g+\Uop_{(\rho,0)}
\Uop_{(-1,0)}\Uop_{(\bar\rho,0)}g\|_{\Fspace}^2
\\
+\frac14\|\Uop_{(\rho,0)}\Uop_{\beta}\Uop_{(\bar\rho,0)}g-\Uop_{(\rho,0)}
\Uop_{(-1,0)}\Uop_{\beta}\Uop_{(\bar\rho,0)}g\|^2_{\Fspace},
\end{multline}
if we use the fact that $\Uop_{(\rho,\alpha)}$ acts unitarily on $\Fspace$ for 
$\alpha,\rho\in\C$ with $|\rho|=1$. Now, in view of \eqref{eq:commrule}, this gives us that
\begin{equation}
\|f\|^2_{\calE,\beta}=\|g\|^2_{\calE,\bar\rho\beta}
=\frac14\|g+
\Uop_{(-1,0)}g\|_{\Fspace}^2
+\frac14\|\Uop_{\bar\rho\beta}g-
\Uop_{(-1,0)}\Uop_{\bar\rho\beta}g\|^2_{\Fspace}.
\end{equation}
By choosing $\rho=\beta/|\beta|$ we get $\bar\rho\beta=|\beta|>0$, 
and by considering in place of $f$ the rotated function $g$,
we realize that we may take in \eqref{eq:starnorm} the parameter $\beta$ to 
be \emph{positive}. This 
will simplify the presentation below.

The Bargmann-Fock space may be identified with the space $L^2(\R)$ via the
\emph{Bargmann transform}
\begin{equation}
\Bop \varphi(z):=(2\pi)^{-\frac14}\int_\R\e^{-\imag zt+\frac{1}{2}z^2-\frac14t^ 2}\varphi(t)\diff t,
\qquad z\in\C.
\end{equation}
It is an exercise to show that 
\begin{equation}
\|\Bop\varphi\|^2_{\Fspace}=\|\varphi\|_{L^2(\R)}^2=\int_\R|\varphi(t)|^2\diff t,
\label{eq:isometry01}
\end{equation}
so that the identification is an isometric isomorphism. We connect $\varphi\in L^2(\R)$ and 
$f\in\Fspace$ via $f=\Bop\varphi$, and realize that with 
\[
\Mop\varphi(t):=\e^{-\frac14t^2}\varphi(t)
\]
we have that
\[
\e^{-\frac12z^2}\Bop\varphi(z)=(2\pi)^{-\frac14}\Fop\Mop\varphi(z),
\]
where $\Fop$ denotes the Fourier transform
\[
\Fop\psi(z)=\int_{\R}\e^{-\imag zt}\psi(t)\diff t.
\]
It follows that with $\check\varphi(t):=\varphi(-t)$, we have the reflection property
\[
\Uop_{(-1,0)}f(z)=f(-z)=(2\pi)^{-\frac14}\e^{\frac12z^2}
\Fop\Mop\check\varphi(z)=\Bop\check\varphi(z).
\]
Moreover, a calculation reveals that since $\beta>0$, it holds that
\[
\Uop_\beta f(z)=\Uop_\beta\Bop\varphi(z)=\Bop\varphi_\beta(z),\qquad \varphi_\beta(t):=
\e^{\imag\beta t}\varphi(t),
\]
and consequently,
\[
\Uop_{(-1,0)}\Uop_\beta f(z)=\Uop_\beta f(-z)=\Bop\varphi_\beta(-z)=\Bop\check\varphi_\beta(z).
\]
This leads to an alternative expression for the norm in \eqref{eq:starnorm}:
\begin{equation}
\|f\|_{\calE,\beta}^2=\frac14\|\varphi+\check\varphi\|^2_{L^2(\R)}+
\frac14\|\varphi_\beta-\check\varphi_\beta\|^2_{L^2(\R)}.
\label{eq:starnorm02}
\end{equation}
To analyze this norm, let us write
\[
\xi:=\varphi+\check\varphi,\qquad \eta:=\varphi_\beta-\check\varphi_\beta,
\]
so that $\xi$ is even while $\eta$ is odd.
We may solve for $\varphi$ in terms of $\xi$ and $\eta$:
\begin{equation}
\varphi(t)=\frac{\eta(t)+\e^{-\imag\beta t}\xi(t)}{2\cos\beta t}.
\label{eq:solvexieta}
\end{equation}
This gives us an alternative approach to the uniqueness theorem 
(Theorem \ref{thm:evenodd}). Indeed, under the assumptions of Theorem \ref{thm:evenodd},
it is given that $\|f\|_{\calE,\beta}=0$,  so by \eqref{eq:starnorm02}, 
$\xi=\varphi+\check\varphi=0$
and $\eta=\varphi_\beta-\check\varphi_\beta=0$. Since $\cos\beta t=0$ holds only for a discrete collection 
of real points $t$, the above identity \eqref{eq:solvexieta}
gives that $\varphi(t)=0$ holds almost everywhere, and hence vanishes as an element of 
$L^2(\R)$. The assertion that $f=0$ now follows by taking the Bargmann-Fock transform.
Moreover, if $\omega_\beta(t):=\cos\beta t$, then by the triangle inequality, 
\[
4\|\omega_\beta\varphi\|^2_{L^2(\R)}\le(\|\eta\|_{L^2(\R)}+\|\xi\|_{L^2(\R)})^2
\le2(\|\eta\|^2_{L^2(\R)}+\|\xi\|^2_{L^2(\R)})=8\|f\|^2_{\calE,\beta}.
\]
At the same time, since 
\[
2\varphi(t)\cos\beta t=\varphi(t)\,\e^{\imag\beta t}+\varphi(t)\,\e^{-\imag\beta t}
=\varphi_\beta(t)+\varphi_{-\beta}(t)
\]
and
\[
\Bop(\varphi_\beta+\varphi_{-\beta})(z)=(\Uop_\beta+\Uop_{-\beta})\Bop\varphi(z)
=\e^{-\frac12|\beta|^2}\big(\e^{\bar\beta z}f(z-\beta)+\e^{-\bar\beta z}f(z+\beta)\big),
\]
where we used that $f=\Bop\varphi$, it follows that
\[
\|(\Uop_\beta+\Uop_{-\beta})f\|^2_{\Fspace}\le8\|f\|_{\calE,\beta}^2.
\]

\noindent{\sc Case II: $\calE=\{1,3,5,\ldots\}$.}
The formula for the induced norm gets slightly modified:
\begin{equation}
\label{eq:starnorm2}
\|f\|^2_{\calE,\beta}
=\frac14\|f-\Uop_{(-1,0)}f\|_{\Fspace}^2
+\frac14\|\Uop_{\beta}f+\Uop_{(-1,0)}\Uop_{\beta}f\|_{\Fspace}^2
\end{equation}
The rotation argument worked out
in the previous case applies here as well, and we may take $\beta>0$. If we appeal to 
the Bargmann transform as in Case I, we express the norm as
\begin{equation}
\|f\|_{\calE,\beta}^2=\frac14\|\varphi-\check\varphi\|^2_{L^2(\R)}+
\frac14\|\varphi_\beta+\check\varphi_\beta\|^2_{L^2(\R)}.
\label{eq:starnorm02.1}
\end{equation}
so that with 
\[
\xi^\circledast:=\varphi-\check\varphi,\qquad \eta^\circledast:=\varphi_\beta+\check\varphi_\beta,
\]
the roles get reversed: $\xi^\circledast$ is odd while $\eta^\circledast$ is even. 
The recovery formula \eqref{eq:solvexieta} remains unchanged,
\begin{equation}
\varphi(t)=\frac{\eta^\circledast(t)+\e^{-\imag\beta t}\xi^\circledast(t)}{2\cos\beta t}.
\label{eq:solvexieta.1}
\end{equation}

\section{The noninterpolation and nonsampling properties}

We turn to the proof of Theorem \ref{thm:samplint}.

\begin{proof}[Proof of Theorem \ref{thm:samplint}]
Without loss of generality, by the rotation argument in the previous section,
we may take $\beta>0$.

We first consider the instance when $\calE=\{0,2,4,\ldots\}$.
In the interpolation problem, the question is whether the symmetrized functions 
\[
f_{\mathrm{even}}(z):=\frac12(f(z)+f(-z)),\qquad
f_{\mathrm{odd},\beta}(z):=\frac12(\Uop_\beta f(z)-\Uop_\beta f(-z)),
\]
may be prescribed freely among the even and odd functions in $\Fspace$, respectively.
These symmetrized functions are related to the functions $\xi,\eta$ of the previous 
subsection:
\[
f_{\mathrm{even}}(z)=\frac12\Bop(\varphi+\check\varphi)(z)=\frac12\Bop\xi(z),\qquad
f_{\mathrm{odd},\beta}(z)=\frac12\Bop(\varphi_\beta-\check\varphi_\beta)(z)=\frac12\Bop\eta(z),
\]
where we recall that $\xi$ is even while $\eta$ is odd. This means that the interpolation
question asks whether we can always, for given $\xi,\eta$ in $L^2(\R)$ that are even and odd, 
respectively, guarantee that the solution $\varphi$ to \eqref{eq:solvexieta} is in $L^2(\R)$.
If e.g. $\xi,\eta$ are continuous and 
\[
\eta(t)+\e^{\imag\beta t}\xi\ne0
\]
whenever $\cos\beta t=0$, the division will cause a singularities at 
the zeros of $\cos\beta t$ which will make it impossible that $\varphi$ given by
\eqref{eq:solvexieta} is in $L^2(\R)$. This means that the interpolation property fails, as claimed.

We turn to the sampling property. In terms of the formula \eqref{eq:solvexieta}, the sampling
property asks for the inequality
\begin{equation}
\label{eq:sampling1.1}
\epsilon\|\varphi\|^2_{L^2(\R)}\le \|\xi\|^2_{L^2(\R)}+\|\eta\|^2_{L^2(\R)}
\end{equation}
for some fixed positive constant $\epsilon$ and all $\varphi\in L^2(\R)$ given by 
\eqref{eq:solvexieta}. To see that this cannot be true, we pick $\xi=0$ and 
\[
\eta(t)=(1+t^2)^{-1}|\cos\beta t|^{\theta}\mathrm{sgn}(t),
\]
where $\mathrm{sgn}(t)$ denotes the sign of $t$, which equals $1$ for positive $t$, $-1$
for negative $t$, and $0$ for $t=0$. We calculate that for $\theta>0$, 
\[
\int_\R|\xi(t)|^2\diff t+\int_\R|\eta(t)|^2\diff t=\int_\R(1+t^2)^{-2}|\cos\beta t|^{2\theta}\diff t\le
\int_\R(1+t^2)^{-2}\diff t\le3,
\]
while
\[
\varphi(t)=\frac{\eta(t)+\e^{\imag\beta t}\xi(t)}{2\cos\beta t}=(1+t^2)^{-1}
\frac{|\cos\beta t|^{\theta}\mathrm{sgn}(t)}{2\cos\beta t}
\]
has
\[
\int_\R|\varphi(t)|^2\diff t=\frac12\int_{\R}|\cos\beta t|^{2\theta-2}(1+t^2)^{-2}\diff t\to+\infty
\]
as $\theta\to0^+$. It now follows that the sampling inequality
\eqref{eq:sampling1.1} cannot hold for a fixed positive constant $\epsilon$.

The remaining case when $\calE=\{1,3,5,\ldots\}$ is entirely analogous and left to the reader.
\end{proof}

\begin{rem}
Not only do we not have that the sampling property fails, also the weaker property 
\eqref{eq:pteval} fails. This can be seen by considering the point evaluation functionals
$f\mapsto f(w)$ as functionals acting on the function $\varphi\in L^2(\R)$, where
$f=\Bop\varphi$.
\end{rem}

\subsection*{Acknowledgements}
The author acknowledges support from Vetenskapsrådet (VR grant 2020-03733), the Leverhulme
trust (grant VP1-2020-007), and by grant 075-15-2021-602 of the Government of the Russian
Federation for the state support of scientific research, carried out under the supervision of leading
scientists.

\end{document}